\title{Arithmetic properties of the Genocchi numbers and their generalization}
\author{\sc Bakir FARHI \\
Laboratoire de Mathématiques appliquées \\
Faculté des Sciences Exactes \\
Université de Bejaia, 06000 Bejaia, Algeria \\[1mm]
\href{mailto:bakir.farhi@gmail.com}{bakir.farhi@gmail.com} \\[1mm]
\url{http://farhi.bakir.free.fr/}
}
\date{}
\let\up=\textsuperscript
\def\Q{{\mathbb Q}}
\def\N{{\mathbb N}}
\def\Z{{\mathbb Z}}
\def\int{\mathrm{Int}}
\def\den{\mathrm{den}}    
\def\num{\mathrm{num}}    
\def\restmod#1#2{#1\ (\mathrm{mod}\ #2)} 
\def\EMdash{\leavevmode\hbox to 10.6mm{\vrule height .63ex depth -.59ex
    width 10mm\hfill}}
\theoremstyle{plain}
\numberwithin{equation}{section}
\newtheorem{thm}{Theorem}[section]
\newtheorem{rmq}[thm]{Remark}
\newtheorem{prop}[thm]{Proposition}
\newtheorem{coll}[thm]{Corollary}
\begin{document}
\maketitle
\begin{abstract}
This note is devoted to establish some new arithmetic properties of the generalized Genocchi numbers $G_{n , a}$ ($n \in \mathbb{N}$, $a \geq 2$). The resulting properties for the usual Genocchi numbers $G_n = G_{n , 2}$ are then derived. We show for example that for any even positive integer $n$, the Genocchi number $G_n$ is a multiple of the odd part of $n$.   
\end{abstract}
\noindent\textbf{MSC 2010:} Primary 11B68, 13F25, 11A07. \\
\textbf{Keywords:} Genocchi numbers, generalized Genocchi numbers, Bernoulli numbers, formal power series, IDC-series, congruences.

\section{Introduction and Notations}
Throughout this note, we let $\N^*$ denote the set of positive integers. For a given prime number $p$, we let $\vartheta_p$ denote the usual $p$-adic valuation. For a given rational number $r$, we let $\num(r)$ and $\den(r)$ respectively denote the numerator and the denominator of $r$; precisely, if $d$ is the smallest positive integer satisfying $d r \in \Z$ then we have: $\den(r) := d$ and $\num(r) := d r$. Next, for given positive integers $a$ and $n$, we let $\pi_a(n)$ denote the greatest positive divisor of $n$ which is coprime with $a$. Then, it is immediate that:
$$
\pi_a(n) ~=~ \prod_{\begin{subarray}{c}
p \text{ prime} \\
p \nmid a
\end{subarray}} p^{\vartheta_p(n)} .
$$
In the particular case $a = 2$, $\pi_a(n)$ is called \emph{the odd part} of $n$; it is the greatest odd positive divisor of $n$.

Additionally, we need to extend the congruences in $\Z$ to $\Q$ as follows:
\begin{quotation}
\noindent For $a , b \in \Q$ and $n \in \N^*$, we write $a \equiv \restmod{b}{n}$ if the numerator of the rational number $(a - b)$ is a multiple of $n$.
\end{quotation}
In the same context, an equivalent meaning of the congruence $a \equiv \restmod{b}{n}$ consists to say that for any prime divisor $p$ of $n$, we have $\vartheta_p(a - b) \geq \vartheta_p(n)$. We can easily check that some properties (but not all) of the congruences in $\Z$ become valid in $\Q$. For example, we can sum side to side several congruences in $\Q$ with a same modulus. Also, if $a , b , c \in \Q$ and $n \in \N^*$ such that $\den(c)$ is coprime with $n$ then we have:
$$
a \equiv \restmod{b}{n} ~\Longrightarrow~ a c \equiv \restmod{b c}{n} .
$$
However, we cannot multiply (side to side) several congruences in $\Q$ with a same modulus (contrary to the congruences in $\Z$).

Further, we denote by $\Q[[t]]$ the ring of formal power series in $t$ with coefficients in $\Q$. If an element $f$ of $\Q[[t]]$ is represented as
$$
f(t) ~=~ \sum_{n = 0}^{+ \infty} a_n \frac{t^n}{n!}
$$
(where $a_n \in \Q$, $\forall n \in \N$), we call the $a_n$'s \emph{the differential coefficients} of $f$ (because each $a_n$ is the $n$\up{th} derivative of $f$ at $0$). If the $a_n$'s are all integers, we say that $f$ is an \emph{IDC-series} (IDC abbreviates the expression ``with Integral Differential Coefficients''). Many usual functions are IDC-series; we can cite for example the functions $x \mapsto e^x$, $x \mapsto \sin{x}$, $x \mapsto \cos{x}$, $x \mapsto \ln(1 + x)$, and so on. We can easily see that the sum and the product of two IDC-series become an IDC-series. Other important properties of the IDC-series are given by the author \cite{far} who used them to establish his generalization of the Genocchi theorem (see below). One of those properties is given by the following proposition:
\begin{prop}[{\cite[Corollary 2.2]{far}}]\label{p1}
Let $f(t) = \sum_{n = 0}^{+ \infty} a_n \frac{t^n}{n!}$ be an IDC-series with $a_0 \neq 0$. Then the formal power series $\frac{a_0}{f(a_0 t)}$ is also an IDC-series. 
\end{prop}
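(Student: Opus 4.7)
The plan is to denote $\frac{a_0}{f(a_0 t)} = \sum_{n=0}^{+\infty} b_n \frac{t^n}{n!}$ with $b_n \in \Q$ a priori, and prove by induction on $n$ that every $b_n$ is an integer. The starting observation is that substituting $a_0 t$ into $f$ gives
$$
f(a_0 t) ~=~ \sum_{n=0}^{+\infty} a_n a_0^n \frac{t^n}{n!} ,
$$
so the differential coefficients of $f(a_0 t)$ are the integers $a_n a_0^n$. This is precisely the point of the substitution: the rational denominators that would appear in $1/f(t)$ are killed before we ever invert.

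Next I would exploit the identity $f(a_0 t) \cdot \frac{a_0}{f(a_0 t)} = a_0$. Expanding the product of the two exponential generating functions using the usual convolution rule yields
$$
\sum_{k=0}^{n} \binom{n}{k} (a_k a_0^k) \, b_{n-k} ~=~ \begin{cases} a_0 & \text{if } n = 0, \\ 0 & \text{if } n \geq 1. \end{cases}
$$
The case $n = 0$ gives $a_0 b_0 = a_0$, hence $b_0 = 1 \in \Z$ (this uses $a_0 \neq 0$). For $n \geq 1$, isolating the term $k = 0$ and dividing by $a_0$ gives the recursion
$$
b_n ~=~ - \sum_{k=1}^{n} \binom{n}{k} a_k a_0^{k-1} b_{n-k} .
$$

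The induction now runs essentially for free: assuming $b_0 , b_1 , \dots , b_{n-1} \in \Z$, each summand on the right-hand side is a product $\binom{n}{k} \cdot a_k \cdot a_0^{k-1} \cdot b_{n-k}$ of integers (here $k \geq 1$, so $a_0^{k-1}$ is still an integer, which is exactly where the substitution $t \mapsto a_0 t$ pays off). Therefore $b_n \in \Z$, and the induction closes.

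There is no real obstacle to surmount: the whole argument hinges on the single algebraic observation that dividing the convolution identity by $a_0$ costs us exactly one power of $a_0$, which is compensated by the $a_0^k$ factor coming from $f(a_0 t)$ as soon as $k \geq 1$. The closest thing to a subtle point is making sure that the division by $a_0$ in passing from the convolution identity to the recursion is legitimate (it is, since $a_0 \in \Z \setminus \{0\}$ and we are working in $\Q$), and remembering that although congruences in $\Q$ do not multiply freely, the ring $\Q[[t]]$ still lets us manipulate these series formally without any such caveat.
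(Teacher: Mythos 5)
Your proof is correct and complete: the convolution identity for exponential generating functions gives exactly the recursion you state, the base case $b_0 = 1$ uses $a_0 \neq 0$, and the key point --- that the factor $a_0^k$ coming from the substitution $t \mapsto a_0 t$ absorbs the single division by $a_0$ for every $k \geq 1$ --- is correctly identified and exploited. Note that this paper does not actually prove Proposition \ref{p1}; it only cites it from \cite[Corollary 2.2]{far}, so there is no in-text proof to compare against, but your argument is the natural one and needs no repair.
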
 
Showing that a given function is an IDC-series is, in general, not easy. For example, the statement that the function $t \mapsto \frac{2 t}{e^t + 1}$ is an IDC-series constitute a profound theorem due to Genocchi \cite{gen}. The differential coefficients of the last function (which thus are all integers) are called \emph{the Genocchi numbers} and denoted by $G_n$ ($n \in \N$); so we have:
$$
\frac{2 t}{e^t + 1} ~=~ \sum_{n = 0}^{+ \infty} G_n \frac{t^n}{n!} .
$$
For a study of the Genocchi numbers, the reader can consult \cite{com,dum,far,gan,gen,rio}.

Very recently, the author \cite{far} has generalized the Genocchi numbers by considering, for a given integer $a \geq 2$, the function $t \mapsto \frac{a t}{e^{(a - 1) t} + e^{(a - 2) t} + \dots + e^t + 1}$ and its expansion into a power series:
$$
\frac{a t}{e^{(a - 1) t} + e^{(a - 2) t} + \dots + e^t + 1} ~=~ \sum_{n = 0}^{+ \infty} G_{n , a} \frac{t^n}{n!} .
$$
So, for $a = 2$, we simply obtain the usual Genocchi numbers; that is $G_{n , 2} = G_n$ ($\forall n \in \N$). The main result of \cite{far} then states that the $G_{n , a}$'s are all integers, which generalizes the Genocchi theorem. The Genocchi numbers as well as their generalization are ultimately related to the Bernoulli numbers ${(B_n)}_{n \in \N}$, which can be defined by the exponential generating function:
$$
\frac{t}{e^t - 1} ~=~ \sum_{n = 0}^{+ \infty} B_n \frac{t^n}{n!}
$$
(see e.g., \cite{com,nie}). Among others, we have the following connection between the generalized Genocchi numbers and the Bernoulli numbers:
\begin{prop}[{\cite[Proposition 4.1]{far}}]\label{p2}
For any positive integers $a$ and $n$, with $a \geq 2$, we have:
$$
G_{n , a} ~=~ \sum_{k = 0}^{n - 1} \binom{n}{k} B_k a^k .
$$
\end{prop}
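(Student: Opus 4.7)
The plan is to manipulate the exponential generating function defining $G_{n,a}$ into a product whose two factors are individually familiar, and then read off the coefficient of $t^n/n!$ by the usual Cauchy product. There is no deep difficulty here; the identity is essentially a bookkeeping consequence of the geometric series formula.

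First I would rewrite the denominator of the defining function. Setting $x = e^t$, the sum $1 + e^t + e^{2t} + \cdots + e^{(a-1)t}$ is the geometric sum $1 + x + \cdots + x^{a-1} = (x^a-1)/(x-1)$, hence equals $(e^{at}-1)/(e^t-1)$. Consequently,
$$
\frac{a t}{e^{(a-1)t} + e^{(a-2)t} + \dots + e^t + 1} \;=\; \frac{a t\,(e^t-1)}{e^{at}-1} \;=\; (e^t - 1) \cdot \frac{a t}{e^{at}-1}.
$$

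Next I would expand each factor. By the defining generating function of the Bernoulli numbers, substituting $t \mapsto a t$ yields
$$
\frac{a t}{e^{a t}-1} \;=\; \sum_{k = 0}^{+\infty} B_k a^k \,\frac{t^k}{k!},
$$
while trivially $e^t - 1 = \sum_{j \geq 1} t^j / j!$. Multiplying the two series (a standard Cauchy product of formal power series) and collecting the coefficient of $t^n/n!$ for $n \geq 1$ gives
$$
\sum_{\substack{k + j = n \\ k \geq 0,\, j \geq 1}} \binom{n}{k} B_k a^k \;=\; \sum_{k = 0}^{n-1} \binom{n}{k} B_k a^k,
$$
which by comparison with the generating function $\sum_{n \geq 0} G_{n,a}\,t^n/n!$ is exactly $G_{n,a}$, proving the proposition.

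The only step requiring any care is recognizing that the denominator telescopes to $(e^{at}-1)/(e^t-1)$, which makes the Bernoulli generating function appear; after that, everything is a routine identification of coefficients. There is no genuine obstacle.
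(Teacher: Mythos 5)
Your proof is correct: the telescoping of the denominator to $(e^{at}-1)/(e^t-1)$, the substitution $t \mapsto at$ in the Bernoulli generating function, and the Cauchy product with $e^t-1$ (whose missing $j=0$ term is exactly what truncates the sum at $k = n-1$) together give the identity for all $n \geq 1$. Note that this paper does not reprove the proposition but imports it from the cited reference; your argument is the standard derivation and is essentially the same route.
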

Furthermore, the Bernoulli numbers have many arithmetic properties. The most famous is certainly that given by the von Staudt-Clausen theorem (see e.g., \cite{har,nie}), recalled below:
\begin{thm}[von Staudt-Clausen]
For any even positive integer $n$, we have that:
$$
B_n + \sum_{\begin{subarray}{c}
p \text{ prime} \\
(p - 1) \mid n
\end{subarray}} \frac{1}{p} ~\in~ \Z .
$$
\end{thm}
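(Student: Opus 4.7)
The plan is to reduce the statement to a local claim at each prime and then extract $B_n$ modulo the $p$-integers from its Stirling number expansion. Since $\frac{1}{q}$ has non-negative $p$-adic valuation for every prime $q \neq p$, it suffices to show that for each prime $p$,
$$
\vartheta_p\!\left( B_n + \frac{\varepsilon_p}{p} \right) \;\geq\; 0 ,
$$
with $\varepsilon_p = 1$ if $(p-1) \mid n$ and $\varepsilon_p = 0$ otherwise.

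To get at $B_n$ prime-by-prime I would rewrite $\frac{t}{e^t - 1}$ as $\frac{\ln(1 + (e^t - 1))}{e^t - 1} = \sum_{m \geq 0} \frac{(-1)^m (e^t - 1)^m}{m + 1}$ and combine with the classical $(e^t - 1)^m = m! \sum_{n \geq m} S(n, m) \frac{t^n}{n!}$ (where $S(n, m)$ denotes the Stirling number of the second kind) to obtain
$$
B_n \;=\; \sum_{m = 0}^{n} \frac{(-1)^m m!}{m + 1}\, S(n, m) .
$$
The goal is then to establish that, $p$-adically, the only non-$p$-integral contribution in this sum is $-\varepsilon_p/p$, coming from the single term $m = p - 1$.

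The $m = p - 1$ term is handled quickly: from $m!\, S(n, m) = \sum_{j = 0}^m (-1)^{m - j} \binom{m}{j} j^n$ and the congruence $\binom{p - 1}{j} \equiv (-1)^j \pmod{p}$, one gets $(p - 1)!\, S(n, p - 1) \equiv (-1)^{p - 1} \sum_{j = 0}^{p - 1} j^n \pmod{p}$, and Fermat's little theorem reduces the inner sum to $-\varepsilon_p \pmod{p}$; dividing by $p$ then places this term at $-\varepsilon_p / p$ modulo the $p$-integers.

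The main obstacle will be the complementary claim that every other term $\frac{(-1)^m m!\, S(n, m)}{m + 1}$ is already $p$-integral. When $p \nmid m + 1$ this is immediate; the delicate case is $p \mid m + 1$ with $m + 1 > p$, where the factor $m!/(m + 1)$ can fail to be $p$-integral (for instance $p = 2$, $m = 3$ gives $m!/(m + 1) = 3/2$), so the missing power of $p$ has to be supplied by $S(n, m)$ itself. I would recover it by expanding $m!\, S(n, m) = \sum_{j = 0}^m (-1)^{m - j} \binom{m}{j} j^n$, grouping $j$ into residue classes modulo $p$, and combining Fermat's little theorem with Kummer's theorem on $\vartheta_p(\binom{m}{j})$ to extract the needed factors of $p$. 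Summing the resulting contributions over all primes gives $B_n \equiv -\sum_{(p - 1) \mid n} 1/p \pmod{\Z}$, which is the von Staudt-Clausen identity.
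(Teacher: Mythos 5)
First, a point of reference: the paper does not prove this theorem. It is recalled as a classical result with citations to Hardy--Wright and Nielsen, and only its corollary $\vartheta_p(B_n) \geq -1$ is used later. So your proposal has to stand on its own. Its skeleton is a standard and legitimate route: the identity $B_n = \sum_{m=0}^{n} \frac{(-1)^m m!\, S(n,m)}{m+1}$ is correct (the double sum is a summable family in $\Q[[t]]$, so the rearrangement is licit), the reduction to a statement about $\vartheta_p$ for each prime separately is correct, and your treatment of the term $m = p-1$ via $\binom{p-1}{j} \equiv (-1)^j \pmod{p}$ and $\sum_{j=0}^{p-1} j^n \equiv -\varepsilon_p \pmod{p}$ is complete and right.

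The gap is in the ``complementary claim,'' which is where essentially all the work sits and which you only gesture at. Two concrete problems. First, the claim that every term with $m \neq p-1$ is $p$-integral is \emph{false} when $n$ is odd: for $p = 2$, $m = 3$, $n = 3$ the term is $\frac{3!\, S(3,3)}{4} = \frac{3}{2}$. Any correct proof must therefore invoke the hypothesis that $n$ is even, and your sketch (residue classes modulo $p$, Fermat, Kummer) never does. Fermat's little theorem only controls $j^n$ modulo $p$, whereas the exceptional term needs $3^n \equiv 1 \pmod{4}$, i.e., information about the order of $3$ modulo $4$ --- this is exactly where parity enters, and your toolkit cannot see it. As written, the plan cannot close. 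Second, the proposed machinery is misdirected: Legendre's formula gives $\vartheta_p(m!) \geq \vartheta_p(m+1)$ whenever $p \mid m+1$, $m+1 \neq p$ and $(p, m+1) \neq (2,4)$ --- equivalently, $(m+1) \mid m!$ for every composite $m+1 \neq 4$ --- so in all those cases the term is $p$-integral with no input from $S(n,m)$ whatsoever. The only genuinely delicate term is $m = 3$, $p = 2$, where one checks directly that $3! \, S(n,3) = 3^n - 3\cdot 2^n + 3 \equiv 0 \pmod{4}$ for even $n \geq 2$. Replacing your residue-class/Kummer paragraph by these two observations would turn the sketch into a proof.
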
 
\noindent Because it is known that $B_0 = 1$, $B_1 = - \frac{1}{2}$ and $B_n = 0$ for any odd integer $n \geq 3$, we can derive from the von Staudt-Clausen theorem the following immediate corollary which is useful for our purpose:
\begin{coll}\label{c1}
For any natural number $n$ and any prime number $p$, we have that:
$$
\vartheta_p(B_n) \geq - 1 .
$$
\end{coll}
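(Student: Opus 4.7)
The plan is to split into cases according to $n$, and for the remaining case to extract the $p$-adic information directly from the von Staudt-Clausen theorem. The three easy cases are $n=0$ where $B_0=1$ gives $\vartheta_p(B_0)=0$; $n=1$ where $B_1=-1/2$ gives $\vartheta_2(B_1)=-1$ and $\vartheta_p(B_1)=0$ for odd $p$; and $n$ odd with $n\geq 3$, where $B_n=0$ so $\vartheta_p(B_n)=+\infty$. Each of these trivially satisfies the required lower bound.

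The remaining case is $n$ even and positive. Fix such $n$ and a prime $p$, and let
$$
S_n ~=~ \sum_{\begin{subarray}{c} q \text{ prime} \\ (q-1)\mid n\end{subarray}} \frac{1}{q}
$$
be the sum appearing in von Staudt-Clausen, so that $B_n + S_n \in \Z$. I would then split according to whether $p$ occurs in $S_n$, i.e.\ whether $(p-1)\mid n$.

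If $(p-1)\nmid n$, then $p$ does not index any term of $S_n$, so every denominator appearing in $S_n$ is coprime to $p$, giving $\vartheta_p(S_n)\geq 0$. Combined with $\vartheta_p(B_n+S_n)\geq 0$ (which follows from $B_n+S_n\in\Z$) and the ultrametric inequality, this yields $\vartheta_p(B_n)\geq 0\geq -1$. If instead $(p-1)\mid n$, write
$$
B_n ~=~ -\frac{1}{p} ~-~ \sum_{\begin{subarray}{c} q \text{ prime},\, q\neq p \\ (q-1)\mid n\end{subarray}} \frac{1}{q} ~+~ (B_n + S_n),
$$
where the last term lies in $\Z$ and the middle sum has every denominator coprime to $p$; hence the middle sum and the last term together have $p$-adic valuation at least $0$, whereas $-1/p$ has valuation $-1$. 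By the ultrametric inequality (strict, since the valuations differ), $\vartheta_p(B_n)=-1$, which again meets the required bound.

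There is no real obstacle here; the only thing to be a little careful about is invoking the ultrametric inequality correctly for rational $p$-adic valuations and observing that the non-$p$ terms of $S_n$ contribute valuation $\geq 0$, since each prime $q\neq p$ is $p$-adically a unit.
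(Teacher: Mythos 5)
Your proposal is correct and follows exactly the route the paper intends: the paper declares the corollary ``immediate'' from von Staudt--Clausen together with the values $B_0 = 1$, $B_1 = -\frac{1}{2}$, and $B_n = 0$ for odd $n \geq 3$, and your case analysis (including the two subcases $(p-1) \nmid n$ and $(p-1) \mid n$ for even $n$) simply writes out that same deduction in full detail.
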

The purpose of this note consists to establish arithmetic properties of the numbers $G_{n , a}$ (in addition to their integrality, previously proven in \cite{far}). Precisely, we obtain for the $G_{n , a}$'s a nontrivial simple divisor and a nontrivial congruence modulo $a$.

\section{The results and the proofs}
We begin by establishing nontrivial divisors for the generalized Genocchi numbers. 

\begin{thm}\label{t1}
Let $a \geq 2$ be an integer. Then for all positive integer $n$, the integer $G_{n , a}$ is a multiple of the integer $\pi_a(n)$.
\end{thm}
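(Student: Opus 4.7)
The strategy is to deduce Theorem~\ref{t1} from the following, stronger-looking integer divisibility, valid for every $n \geq 1$:
\[
n \;\mid\; a^{n-1}\, G_{n,a}.
\]
Granting this, the conclusion is immediate: $\pi_a(n)$ divides $n$, hence it divides $a^{n-1} G_{n,a}$, and since $\gcd(\pi_a(n), a) = 1$ by the very definition of $\pi_a$, the factor $a^{n-1}$ can be cancelled off, leaving $\pi_a(n) \mid G_{n,a}$ as required. So the proof reduces to producing this single arithmetic statement.

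To establish the displayed divisibility, I would apply Proposition~\ref{p1} to the denominator appearing in the defining generating function of the $G_{n,a}$, namely
\[
g(t) \;:=\; e^{(a-1)t} + e^{(a-2)t} + \cdots + e^t + 1 \;=\; \sum_{k=0}^{\infty} \Bigl(\sum_{j=0}^{a-1} j^k\Bigr)\frac{t^k}{k!}.
\]
The differential coefficients of $g$ are the classical power sums $\sum_{j=0}^{a-1} j^k \in \Z$, so $g$ is an IDC-series with $g(0) = a \neq 0$. Proposition~\ref{p1} then tells us that $a/g(at)$ is again an IDC-series.

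The crux of the argument is to identify the differential coefficients of $a/g(at)$. Using the telescoping identity $g(t) = (e^{at} - 1)/(e^t - 1)$ together with the substitution $t \mapsto at$ in the defining formula for the $G_{n,a}$, a short manipulation of exponential generating functions gives
\[
\frac{a}{g(at)} \;=\; \frac{a(e^{at} - 1)}{e^{a^2 t} - 1} \;=\; \sum_{n \geq 1} \frac{a^{n-1}\, G_{n,a}}{n}\cdot\frac{t^{n-1}}{(n-1)!}.
\]
Since the left-hand side is IDC, each of the quantities $a^{n-1} G_{n,a}/n$ must be an integer, which is exactly the required divisibility.

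The only genuinely creative step of the plan is guessing the right IDC-series to plug into Proposition~\ref{p1}; once $g$ is singled out, everything else --- checking that $g$ is IDC, rewriting $a/g(at)$ via the geometric-sum identity, and cancelling $a^{n-1}$ thanks to $\gcd(\pi_a(n), a) = 1$ --- is routine power-series and elementary-number-theory bookkeeping.
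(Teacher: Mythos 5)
Your proposal is correct and follows essentially the same route as the paper: apply Proposition~\ref{p1} to the IDC-series $e^{(a-1)t}+\cdots+e^t+1$ (with constant differential coefficient $a$), identify the differential coefficients of $a/g(at)$ as $a^{n-1}G_{n,a}/n$ to get $n \mid a^{n-1}G_{n,a}$, and then cancel $a^{n-1}$ against $\pi_a(n)$ by coprimality. The detour through the closed form $a(e^{at}-1)/(e^{a^2t}-1)$ is harmless but unnecessary, since the identification of coefficients follows directly from dividing the defining generating function by $t$ and substituting $t \mapsto at$, exactly as in the paper.
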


\begin{proof}
By applying Proposition \ref{p1} to the IDC-series $t \mapsto e^{(a - 1) t} + e^{(a - 2) t} + \dots + e^t + 1$, we find that the function $t \stackrel{f}{\mapsto} \frac{a}{e^{(a - 1) a t} + e^{(a - 2) a t} + \dots + e^{a t} + 1}$ is an IDC-series. Now, let us explicitly express the differential coefficients of $f$ in terms of the generalized Genocchi numbers. We have:
\begin{eqnarray*}
\frac{a}{e^{(a - 1) t} + e^{(a - 2) t} + \dots + e^t + 1} & = & \frac{1}{t} \cdot  \frac{a t}{e^{(a - 1) t} + e^{(a - 2) t} + \dots + e^t + 1} \\[2mm]
& = & \frac{1}{t} \sum_{n = 1}^{+ \infty} G_{n , a} \frac{t^n}{n!} \\[2mm]
& = & \sum_{n = 1}^{+ \infty} \frac{G_{n , a}}{n} \cdot \frac{t^{n - 1}}{(n - 1)!} .
\end{eqnarray*}
Then, by substituting $t$ by $a t$, we get
$$
f(t) ~=~ \sum_{n = 1}^{+ \infty} \frac{a^{n - 1} G_{n , a}}{n} \cdot \frac{t^{n - 1}}{(n - 1)!} .
$$
So, since $f$ is an IDC-series, we derive that $\frac{a^{n - 1} G_{n , a}}{n} \in \Z$ ($\forall n \in \N^*$); that is
$$
n \mid a^{n - 1} G_{n , a} ~~~~~~~~~~ (\forall n \in \N^*) .
$$
Next, since $\pi_a(n)$ is a divisor of $n$ which is coprime with $a$, we derive that $\pi_a(n) \mid a^{n - 1} G_{n , a}$ and $\pi_a(n)$ is coprime with $a^{n - 1}$. Then, by Gauss's lemma, we conclude that $\pi_a(n)$ divides $G_{n , a}$, as required.
\end{proof}

By taking $a = 2$ in Theorem \ref{t1}, we immediately deduce the following corollary which concerns the usual Genocchi numbers:

\begin{coll}
For all positive integer $n$, the Genocchi number $G_n$ is a multiple of the odd part of $n$. \hfill $\square$
\end{coll}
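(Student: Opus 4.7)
The plan is simply to specialize Theorem \ref{t1} to the case $a = 2$. First I would recall the definitional identification $G_n = G_{n, 2}$ for every $n \in \N$, which is immediate from comparing the generating functions: the defining series of the usual Genocchi numbers, $\frac{2 t}{e^t + 1} = \sum_{n \geq 0} G_n \frac{t^n}{n!}$, is exactly the $a = 2$ instance of the generating series $\frac{a t}{e^{(a-1) t} + \dots + e^t + 1} = \sum_{n \geq 0} G_{n , a} \frac{t^n}{n!}$ introduced in the preceding section. Thus the Genocchi divisibility claim is nothing other than the $a = 2$ case of Theorem \ref{t1}.

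Second, I would unwind the notation $\pi_a(n)$ at $a = 2$. By the product formula recalled in the introduction,
$$
\pi_2(n) ~=~ \prod_{\substack{p \text{ prime} \\ p \nmid 2}} p^{\vartheta_p(n)} ~=~ \prod_{p \text{ odd prime}} p^{\vartheta_p(n)},
$$
which is precisely the largest odd divisor of $n$, i.e.\ the odd part of $n$. Combining this identification with Theorem \ref{t1} applied at $a = 2$ yields that the odd part of $n$ divides $G_{n , 2} = G_n$, which is the corollary. I do not foresee any obstacle: once the notational identifications $G_n = G_{n , 2}$ and $\pi_2(n) = $ (odd part of $n$) are in place, the statement is a direct translation of Theorem \ref{t1}, and the entire substance of the argument is already contained in that theorem and its proof.
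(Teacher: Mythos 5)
Your proposal is correct and matches the paper exactly: the paper derives this corollary by taking $a = 2$ in Theorem \ref{t1}, using precisely the identifications $G_n = G_{n,2}$ and $\pi_2(n) = {}$the odd part of $n$ that you spell out. No gap; the substance lies entirely in Theorem \ref{t1}, as you note.
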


We now turn to study the remainders of the generalized Genocchi numbers $G_{n , a}$ modulo $a$. The main result in this direction is the following:

\begin{thm}\label{t2}
For any integers $a$ and $n$, both greater than $1$, we have
$$
G_{n , a} ~\equiv~ \restmod{1 - \frac{n}{2} \, a}{a} .
$$
\end{thm}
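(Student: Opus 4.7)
The plan is to start from Proposition \ref{p2}, which expresses $G_{n,a}$ as
$$G_{n,a} ~=~ \sum_{k=0}^{n-1} \binom{n}{k} B_k a^k,$$
and to split off the $k=0$ and $k=1$ terms. Using $B_0 = 1$ and $B_1 = -\frac{1}{2}$, these two terms contribute exactly $1 - \frac{n}{2} a$, which matches the desired right-hand side. So the whole proof reduces to showing that the tail
$$S ~:=~ \sum_{k=2}^{n-1} \binom{n}{k} B_k a^k$$
satisfies $S \equiv \restmod{0}{a}$ in the generalized sense introduced in the introduction.

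To verify this, I would use the $p$-adic reformulation of the congruence: it suffices to show that $\vartheta_p(\binom{n}{k} B_k a^k) \geq \vartheta_p(a)$ for every prime $p$ dividing $a$ and every $k$ with $2 \leq k \leq n-1$. For odd $k \geq 3$ the term vanishes since $B_k = 0$, so only the even indices (and $k=2$) matter. For these, I would combine three easy ingredients: $\vartheta_p(\binom{n}{k}) \geq 0$ since the binomial coefficient is an integer, $\vartheta_p(B_k) \geq -1$ by Corollary \ref{c1}, and $\vartheta_p(a^k) = k\vartheta_p(a)$. Adding these gives
$$\vartheta_p\!\left(\binom{n}{k} B_k a^k\right) ~\geq~ k \vartheta_p(a) - 1,$$
and since $p \mid a$ implies $\vartheta_p(a) \geq 1$ and $k \geq 2$, we obtain $k\vartheta_p(a) - 1 \geq 2\vartheta_p(a) - 1 \geq \vartheta_p(a)$, as required.

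Once each individual term of $S$ is $\equiv \restmod{0}{a}$, summing the congruences (which is a legitimate operation in $\Q$ as noted in the introduction) gives $S \equiv \restmod{0}{a}$, and hence $G_{n,a} \equiv \restmod{1 - \frac{n}{2} a}{a}$. The main conceptual obstacle is that several $B_k$'s genuinely have $p$ in their denominator (for instance $B_2 = \frac{1}{6}$), so the congruence is not apparent term by term in $\Z$; the point is that the factor $a^k$, contributing at least $2\vartheta_p(a)$ to the $p$-adic valuation, absorbs the single unit of denominator allowed by the von Staudt--Clausen bound. There is also a minor bookkeeping point in checking that the $k=1$ contribution $-\frac{n}{2}a$ is well-defined modulo $a$ in the generalized sense — that is, that $\den(\frac{n}{2}a)$ causes no issue — but this is immediate since even if $n$ is odd, the numerator of $-\frac{n}{2}a$ minus any integer differing from it by a multiple of $a$ is handled directly by the definition using valuations.
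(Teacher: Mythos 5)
Your proof is correct and follows essentially the same route as the paper: split off the $k=0,1$ terms of the formula in Proposition \ref{p2} and kill the tail term by term via the von Staudt--Clausen bound $\vartheta_p(B_k)\geq -1$ combined with $\vartheta_p(a^k)\geq k$ for $p\mid a$. The only cosmetic difference is that you fold $\binom{n}{k}$ into the valuation estimate directly, whereas the paper proves $B_k a^k\equiv \restmod{0}{a}$ and then multiplies by the integer $\binom{n}{k}$.
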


\begin{proof}
Let $a$ and $n$ be two integers, both greater than $1$. According to Proposition \ref{p2}, we have that:
\begin{eqnarray}
G_{n , a} & = & \sum_{k = 0}^{n - 1} \binom{n}{k} B_k a^k \notag \\
& = & 1 - \frac{n}{2} a + \sum_{k = 2}^{n - 1} \binom{n}{k} B_k a^k . \label{eq1}
\end{eqnarray} 
Next, by using Corollary \ref{c1}, we have for any $k \in \{2 , 3 , \dots , n - 1\}$ and any prime divisor $p$ of $a$ (so $\vartheta_p(a) \geq 1$):
\begin{eqnarray*}
\vartheta_p\left(B_k a^k\right) & = & \vartheta_p\left(B_k\right) + (k - 1) \vartheta_p(a) + \vartheta_p(a) \\
& \geq & - 1 + (k - 1) + \vartheta_p(a) \\
& = & k - 2 + \vartheta_p(a) \\
& \geq & \vartheta_p(a) ,
\end{eqnarray*}
showing that $B_k a^k \equiv \restmod{0}{a}$ ($\forall k \in \{2 , 3 , \dots , n - 1\}$). By substituting these congruences into \eqref{eq1}, we conclude to the required congruence $G_{n , a} \equiv \restmod{1 - \frac{n}{2} a}{a}$.
\end{proof}

From Theorem \ref{t2}, we immediately derive the following corollary:
\begin{coll}\label{c2}
Let $a \geq 2$ be an integer.
\begin{itemize}
\item If $a$ is odd then we have for any positive integer $n$:
$$
G_{n , a} ~\equiv~ \restmod{1}{a} .
$$
\item If $a$ is even then we have for any integer $n \geq 2$:
\begin{equation}
G_{n , a} ~\equiv~ \restmod{\begin{cases}
1 & \text{if $n$ is even} \\
1 + \frac{a}{2} & \text{if $n$ is odd}
\end{cases}}{a} . \tag*{$\square$}
\end{equation} 
\end{itemize}
\end{coll}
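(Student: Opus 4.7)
The corollary follows from Theorem \ref{t2} by a straightforward case analysis on the parity of $a$ and $n$, plus a separate check for $n = 1$ (since Theorem \ref{t2} requires $n > 1$). The plan is to reduce the quantity $1 - \tfrac{n}{2} a$ modulo $a$, in the sense of the $\Q$-valued congruences defined in the introduction, according to the four parity combinations.

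First, the $n = 1$ branch of the odd-$a$ case is not covered by Theorem \ref{t2}, so I would dispatch it immediately from the definition: Proposition \ref{p2} gives $G_{1,a} = \binom{1}{0} B_0 a^0 = 1$, so trivially $G_{1,a} \equiv 1 \pmod{a}$. For $n \geq 2$, Theorem \ref{t2} yields $G_{n,a} \equiv 1 - \tfrac{n}{2} a \pmod{a}$, and it remains to simplify $1 - \tfrac{n}{2} a$ modulo $a$.

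The computation proceeds case by case. If $a$ is odd and $n$ is even, then $\tfrac{n}{2} a$ is an integer that is a multiple of $a$, so $G_{n,a} \equiv 1 \pmod{a}$. If $a$ is odd and $n$ is odd (with $n \geq 2$, though the argument works for $n = 1$ too), then $\tfrac{n}{2} a = \tfrac{n a}{2}$; since $n a$ is odd, this rational number is already in lowest terms with numerator $n a$, which is a multiple of $a$, so by the definition of the $\Q$-congruence one still has $\tfrac{n}{2} a \equiv 0 \pmod{a}$, giving $G_{n,a} \equiv 1 \pmod{a}$. If $a$ is even and $n$ is even, then $\tfrac{n}{2} a$ is again an integer multiple of $a$, so $G_{n,a} \equiv 1 \pmod{a}$. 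Finally, if $a$ is even and $n$ is odd, write $\tfrac{n}{2} a = n \cdot \tfrac{a}{2}$, which is an integer; since $n$ is odd, $n \cdot \tfrac{a}{2} \equiv \tfrac{a}{2} \pmod{a}$, hence $1 - \tfrac{n}{2} a \equiv 1 - \tfrac{a}{2} \equiv 1 + \tfrac{a}{2} \pmod{a}$, as required.

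There is no real obstacle here; the only subtlety is to remember that $\tfrac{n}{2} a$ can fail to be an integer (when $a$ and $n$ are both odd) and in that case one must verify the congruence via the numerator convention of the introduction rather than by ordinary integer divisibility. Once that point is noted, the four parity subcases are one-line reductions.
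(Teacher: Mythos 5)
Your proposal is correct and is exactly the case analysis the paper leaves implicit when it says the corollary is ``immediately derived'' from Theorem \ref{t2}; your handling of the $\Q$-valued congruence when $a$ and $n$ are both odd, and your separate check of the $n=1$ case via $G_{1,a}=1$ (which Theorem \ref{t2} does not cover), are both sound and fill in the only details the paper omits.
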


\begin{rmq}
By applying Corollary \ref{c2} for $a = 2$, we obtain the well-known result stating that the usual Genocchi numbers of even positive orders are all odd.
\end{rmq}

We conclude this note by the following corollary which is an immediate consequence of Corollary \ref{c2} above.

\begin{coll}
For any integers $a$ and $n$, both greater than $1$, we have:
$$
\gcd\left(G_{n , a} , a\right) ~\in~ \{1 , 2\} .
$$
Besides, the equality $\gcd(G_{n , a} , a) = 2$ holds if and only if $a \equiv \restmod{2}{4}$ and $n$ is odd. \hfill $\square$
\end{coll}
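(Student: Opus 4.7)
The plan is to invoke Corollary \ref{c2} and reduce the statement to a short, elementary gcd computation based on the parities of $a$ and $n$.

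Since $\gcd(G_{n,a}, a)$ depends only on the residue class of $G_{n,a}$ modulo $a$, I would apply Corollary \ref{c2}. In all cases except one, we have $G_{n,a} \equiv \restmod{1}{a}$, which immediately gives $\gcd(G_{n,a}, a) = 1$. This dispatches the corollary whenever $a$ is odd, and whenever $a$ is even with $n$ even.

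The remaining situation is $a$ even and $n$ odd, where $G_{n,a} \equiv \restmod{1 + a/2}{a}$, so the gcd in question equals $\gcd(1 + a/2, a)$. Writing $a = 2m$, I would compute $\gcd(1 + m, 2m)$. The key observation is that $\gcd(1+m, m) = 1$, hence any prime dividing $\gcd(1+m, 2m)$ must divide $2$; consequently the gcd is a power of $2$, in fact either $1$ or $2$. It equals $2$ precisely when $1 + m$ is even, i.e., when $m$ is odd, i.e., when $a \equiv \restmod{2}{4}$.

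Assembling the cases yields both assertions simultaneously: $\gcd(G_{n,a}, a) \in \{1, 2\}$ in every situation, and the value $2$ is attained if and only if $a \equiv \restmod{2}{4}$ and $n$ is odd. There is essentially no obstacle — the corollary is a direct bookkeeping consequence of Corollary \ref{c2}, and the only step with any content is the short gcd calculation in the previous paragraph, which the author has evidently deemed routine (as signalled by the $\square$ placed at the end of the statement).
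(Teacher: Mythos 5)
Your proposal is correct and follows exactly the route the paper intends: the paper gives no written proof (it declares the corollary an immediate consequence of Corollary \ref{c2} and closes the statement with $\square$), and your case analysis plus the computation $\gcd(1+m,2m)\in\{1,2\}$ with equality to $2$ precisely when $m$ is odd is the intended bookkeeping. No gaps.
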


\end{document}